\documentclass[a4paper, oneside, reqno, 10pt]{amsart}
\usepackage{graphicx}
\usepackage{amsmath}
\usepackage{amsthm}
\usepackage{amsfonts}
\usepackage{geometry}
\usepackage[T1]{fontenc}
\usepackage{lmodern}
\usepackage{xcolor}
\usepackage{tikz}
\usepackage{tikz-cd}
\usetikzlibrary{arrows.meta, positioning, calc, shapes, shapes.geometric, patterns, decorations.pathreplacing, decorations.pathmorphing, matrix, fit, backgrounds, plotmarks}
\usepackage{mathtools}
\usepackage{bm}
\usepackage{mathrsfs}
\usepackage{dsfont}
\usepackage{bbm}
\usepackage{soul}
\usepackage{cancel}
\usepackage{comment}

\usepackage[backref,bookmarks=true,colorlinks=true,pdfstartview=FitV,linkcolor=blue, citecolor=red,urlcolor=green]{hyperref}  

\usepackage{setspace}

\newtheorem{theorem}{Theorem}[section]

\newtheorem{lemma}[theorem]{Lemma}

\newtheorem{claim}[theorem]{Claim}

\theoremstyle{definition}
\newtheorem{definition}[theorem]{Definition}

\theoremstyle{remark}

\newcommand{\Hit}{\mathrm{Hit}}
\newcommand{\bR}{\mathbb{R}}

\newcommand{\PSL}{\mathrm{PSL}}

\newcommand{\length}{\ell}
\newcommand{\moduli}{\mathcal{M}}
\newcommand{\Mod}{\mathrm{Mod}}

\newcommand{\pants}{\mathcal{P}}
\newcommand{\Hom}{\mathrm{Hom}}
\newcommand{\diag}{\mathrm{diag}}

\begin{document}
\title{The Thick Part of the $\mathrm{PSL}_n(\mathbb{R})$-Hitchin-Riemann Moduli Space has Infinite Volume}

\author[An]{Huiseong An}
\address{\hskip-\parindent Department of Mathematics\\Sungkyunkwan University}
\email{huiseongan6@gmail.com}

\author[Kim]{Heejae Kim}
\address{\hskip-\parindent Department of Medicine\\Sungkyunkwan University}
\email{heejaek31@gmail.com}

\author[Kim]{Mingook Kim}
\address{\hskip-\parindent Department of Mathematics\\Sungkyunkwan University}
\email{mathmgk5150@gmail.com
}

\author[Lee]{SEUNGJIN LEE}
\address{\hskip-\parindent Department of statistics and Data science\\Yonsei University}
\email{tmdwls5648@gmail.com}

\author[Jung]{Hongtaek Jung}
\address{\hskip-\parindent
School of Mathematics\\KIAS}
\email{htjung1905@gmail.com}

\begin{abstract}
    We prove that the thick part of the $\PSL_n(\bR)$-Hitchin--Riemann moduli space has infinite total Atiyah--Bott--Goldman volume for $n>2$. This result stands in contrast to Mumford's compactness criterion. To achieve this result, we employ Goldman flows and internal sequences to find an infinite series of subsets of identical volume, the images of which in the Hitchin--Riemann moduli space are all mutually disjoint and sit in the thick part. 
\end{abstract}

\maketitle

\section{Introduction}

    In higher Teichm\"uller theory, the $\PSL_n(\bR)$-Hitchin component $\Hit_n(\mathrm{S})$ is a central object of study, serving as a natural higher-rank generalization of the classical Teichm\"uller space. A fundamental question is whether the geometry of the $\PSL_n(\bR)$-Hitchin component parallels that of the classical Teichm\"uller space. For example, Hitchin \cite{Liegroups} shows that the Hitchin components have the same topological type as the Teichm\"uller space; Labourie \cite{Labourie} proves that Hitchin representations and Fuchsian representations share similar dynamical properties; Sun, Wienhard and Zhang \cite{SZ,SWZ} explore symplectic-geometric similarities between the $\PSL_n(\bR)$-Hitchin component and the Teichm\"uller space.
    
    Consider the action of the mapping class group $\Mod(\mathrm{S})$ on the Teichm\"uller space and the Hitchin components. It is a classical result that the mapping class group acts properly discontinuously on the Teichm\"uller space and its quotient space yields the moduli space of Riemann surfaces $\mathcal{M}(\mathrm{S})$. Similarly, it is known  \cite{Lab_cross}  that $\Mod(\mathrm{S})$ acts properly discontinuously on $\Hit_n(\mathrm{S})$. This defines a smooth orbifold  $\mathcal{M}_n(\mathrm{S})=\Hit_n(\mathrm{S})/\Mod(\mathrm{S})$ which we call the $\PSL_n(\bR)$-Hitchin--Riemann moduli space. 
    
    Regarding the topology of the Riemann moduli space, Mumford's compactness criterion states that the $\epsilon$-thick part of  $\mathcal{M}(\mathrm{S})$ is compact. On the other hand, one can naturally generalize the notion of the $\epsilon$-thick part of the moduli space to the Hitchin--Riemann moduli space; see Definition~\ref{thick part}. Given the analogies between the Teichm\"uller space and the Hitchin component, it is natural to ask whether the $\epsilon$-thick part of the $\PSL_n(\bR)$-Hitchin--Riemann moduli space is compact. Our main result, Theorem {\ref{thm:main}}, provides a stronger answer to this question. 
    
\begin{theorem}[Main Theorem]\label{thm:main}
    Let $\mathrm{S}$ be a closed orientable surface of genus $g>1$ and let $\epsilon$ be any positive real number. Then the total Atiyah--Bott--Goldman volume of the $\epsilon$-thick part of the $\PSL_n(\bR)$-Hitchin-Riemann moduli space is infinite provided $n>2$.
\end{theorem}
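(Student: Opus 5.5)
We plan to follow the strategy indicated in the abstract. We will produce an infinite family of subsets $U_1, U_2, \dots \subset \Hit_n(\mathrm{S})$ with three properties: each $U_k$ has the same positive Atiyah--Bott--Goldman volume, each $U_k$ lies in the $\epsilon$-thick part, and the projections of the $U_k$ to $\moduli_n(\mathrm{S})$ are pairwise disjoint and injective up to a uniformly bounded multiplicity. Summing the volumes of the projections then gives infinite volume.

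\textbf{Step 1: flows along a pants decomposition.} Fix a pants decomposition $\pants=\{\gamma_1,\dots,\gamma_{3g-3}\}$ of $\mathrm{S}$. Each $\gamma_i$ has an $(n-1)$-dimensional family of Goldman (twist) flows, generated by Hamiltonians $\log\lambda_j(\rho(\gamma_i))$ built from eigenvalue functions. These flows commute and are Hamiltonian for the Atiyah--Bott--Goldman symplectic form, so they preserve the volume form. For $n=2$ the only such flow is the classical twist, which the mapping class group (Dehn twists) absorbs. For $n>2$ there are \emph{internal} twist directions: flows along $\gamma_i$ by $\exp(t\,\diag(a_1,\dots,a_n))$ with $a$ not proportional to the Fuchsian twist direction. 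These change the conjugacy classes of curves crossing $\gamma_i$ and are not realized by any mapping class.

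\textbf{Step 2: the sets $U_k$.} Start from a small open ball $U_0$ around a Fuchsian point $\rho_0$. Define $U_k=\Phi_{t_k}(U_0)$, where $\Phi_t$ is the internal flow along $\gamma_1$ and $t_k\to\infty$ is an \emph{internal sequence}. By invariance of the volume form, $\mathrm{vol}(U_k)=\mathrm{vol}(U_0)>0$.

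\textbf{Step 3: the $U_k$ stay thick.} The eigenvalues of $\rho(\gamma_i)$ are invariant under twist flows along the disjoint curves of $\pants$. For any other closed curve $c$, we write $\rho_t(c)$ as a product of pieces conjugated by $\exp(tH)$. Using positivity of Hitchin representations (Fock--Goncharov coordinates, or Labourie's Anosov property), we aim to show that the length, or first eigenvalue ratio, of $\rho_t(c)$ is bounded below by that of $\rho_0(c)$ up to a uniform constant. Concretely, we would try to show that internal flows do not decrease the Hilbert length, since they only increase the relevant Fock--Goncharov triple and edge positivity. Combined with the systole lower bound at $\rho_0$, this gives $\epsilon$-thickness after shrinking $\epsilon$ or $U_0$. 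An $\epsilon$ chosen a priori too large can be handled by starting at a Fuchsian point with systole greater than $\epsilon$.

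\textbf{Step 4: disjointness in the quotient (main obstacle).} We must show that for $j\neq k$ no mapping class $\phi$ satisfies $\phi\cdot U_j\cap U_k\neq\emptyset$. We would use a mapping-class-invariant quantity, such as a length spectrum invariant: the set of eigenvalue data of the simple closed curves of minimal length, or the normalized ratio $\lambda_1/\lambda_2$ over short curves. This quantity is controlled on $U_k$ and drifts with $t_k$. The lengths of $\gamma_1,\dots,\gamma_{3g-3}$ are unchanged, but curves crossing $\gamma_1$ acquire eigenvalue data depending on $t$ in a way no Dehn twist can undo, because the internal direction is transverse to the Fuchsian twist. We therefore choose $t_k$ sparse enough that the invariant takes disjoint value ranges on different $U_k$. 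Proper discontinuity ensures the finite-stabilizer multiplicity is uniformly bounded. The delicate point is exactly this: Dehn twists along $\gamma_1$ combined with the internal flow might still identify points, so the invariant must be genuinely insensitive to the ordinary twist while detecting the internal one. Hence $\sum_k \mathrm{vol}(\pi(U_k))=\infty$.
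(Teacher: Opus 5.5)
Your overall plan matches the paper's: flow one open set by the volume-preserving bulging flow along a pants curve, keep the orbit thick, and separate the images in $\moduli_n(\mathrm{S})$. However, both substantive steps have genuine gaps.

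\textbf{Step 3 (thickness).} You start near a Fuchsian point and appeal to an unproved monotonicity (``internal flows do not decrease Hilbert length''). This is not a standard fact, and you would need a lower bound uniform over \emph{all} closed curves and all $t$, which nothing in your setup supplies. The phrase ``$t_k\to\infty$ is an internal sequence'' also reflects a misunderstanding. In Zhang's sense, an internal sequence has the pants internal parameters $\pi_{P_j}$ leaving every compact subset of $\mathcal{I}_j$ while the eigenvalue gaps on $\Gamma_\pants$ stay bounded. Bulging along a pants curve fixes the boundary parameters and, by Lemma~\ref{prop:main}, fixes the internal parameters pointwise, so its orbits are never internal sequences. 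The paper uses this invariance in the opposite direction. It takes the starting set $N_I=\Xi^{-1}\bigl(\prod_j L_j\times\prod_j(B^\circ_{j,I+1}\setminus B_{j,I})\times\mathcal{G}\bigr)$, with pants-curve eigenvalues pinned near the well-separated matrices $M_j$ and internal parameters far out. Theorems~\ref{thm:zhang1} and~\ref{thm:zhang2} then make every non-pants curve uniformly long on $N_I$. Since $\Phi_t(N_I)\subset N_I$, thickness along the whole orbit is automatic.

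\textbf{Step 4 (disjointness).} As you concede, this step is left open, and the invariants you suggest do not provably move. The eigenvalues of every curve disjoint from $\gamma_1$, in particular every pants curve, are constant along the flow, and you give no mapping-class-invariant function that detects $t$. Proper discontinuity also gives finiteness for each $k$ separately but no multiplicity bound uniform in $k$. The missing mechanism has three parts.
\begin{enumerate}
\item The same choice of $N_I$ gives Lemma~\ref{lem:key}(\ref{nondegenerate}): pants-curve lengths are separated by $\zeta$ from each other and from all other simple closed curves, uniformly along the flow. Hence any $\phi$ with $\phi_*\Phi_t(N)\cap\Phi_s(N)\neq\emptyset$ preserves the isotopy class of $\alpha=\gamma_1$.
\item Shrinking $N$ so that $\Mod(\mathrm{S}_1,\partial\mathrm{S}_1)$ acts freely on $\mathcal{R}(N)$ forces $\phi$ to be a power $T_\alpha^m$ of the Dehn twist.
\item $T_\alpha^m\Phi_t$ acts as the Goldman flow with parameter $tQ+m\lambda(\rho(\alpha))$. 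Here $\lambda(\rho(\alpha))$ is regular and ranges over a compact subset of $-\mathfrak{a}^+$, while $Q$ is singular, so $\|tQ+m\lambda(\rho(\alpha))\|\to\infty$ uniformly in $m$. Large Goldman twists displace $N$ from itself \cite[Proposition~3.2]{Choi2025volume}, and commutation of $T_\alpha$ with $\Phi_t$ gives injectivity of $\Pi$ on each $\Phi_t(N)$.
\end{enumerate}
Your remark that the bulging direction is transverse to the Fuchsian twist is the right intuition for (3), and that is where $n>2$ enters. But nothing in your setup gives (1) uniformly along the orbit, so you have no way to reduce an arbitrary mapping class to a twist along $\gamma_1$.
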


    Choi--Jung \cite{Choi2025volume} show that the same conclusion holds for $\epsilon$-thin parts of various higher Teichm\"uller spaces.

    Since the Atiyah--Bott--Goldman volume is locally finite, Theorem~\ref{thm:main} immediately implies that the $\epsilon$-thick part of the Hitchin--Riemann moduli space is noncompact. 
    
    Theorem~\ref{thm:main} is established by directly exploiting the parametrization of the Hitchin component. Specifically, our proof utilizes the ``internal sequences'' introduced in the work of \cite{Zhang2015}. By carefully analyzing these internal sequences, we construct families of open sets with equal positive volume that remain  within the thick part of the Hitchin component, yet pairwise disjoint in the moduli space thereby yielding infinite volume.

\subsection*{Materials and Methods}
The authors acknowledge the use of ChatGPT, and Gemini for writing assistance and literature searches.

\section{Preliminaries}
We recall the notion of Hitchin representations and the mapping class group. Also we define thick parts of Hitchin--Riemann moduli spaces.

Throughout this section, $\mathrm{S}$ will be a compact oriented connected surface of negative Euler characteristic unless otherwise stated.
 
\subsection{Hitchin--Riemann moduli spaces}
\begin{definition}
      A Fuchsian representation is any representation of the form $\tau_n \circ \rho_0$, where $\rho_0 : \pi_1(\mathrm{S}) \to \PSL_2(\bR)$ is a holonomy representation of a hyperbolic metric on $\mathrm{S}$, and $\tau_n :\PSL_2(\bR) \to \PSL_n(\bR)$ is the irreducible representation.
\end{definition}

\begin{definition}
A representation $\rho:\pi_1(\mathrm{S}) \to \PSL_n(\bR)$ is called a $\PSL_n(\bR)$-\emph{Hitchin representation} if there exists a continuous family $\{\rho_t\}_{0\le t \le 1}$ of representations such that 
\begin{enumerate}
    \item $\rho_0 = \rho$ and $\rho_1$ is a Fuchsian representation;
    \item For each oriented boundary component $\zeta$ of $\mathrm{S}$, $\rho_t(\zeta)$ is conjugate to a matrix of the form $\diag(d_1, \cdots, d_n)$ with $d_1>d_2>\cdots>d_n>0$.
\end{enumerate} 
\end{definition}

Denote by 
\[
\Hom_{\mathrm{H}}(\pi_1(\mathrm{S}), \PSL_n(\bR))
\]
the space of $\PSL_n(\bR)$-Hitchin representations. This space is connected when $n$ is odd and has two connected component when $n$ is even. Let $\Hit_n(\mathrm{S})$ be a connected component of
\[
\Hom_\mathrm{H}(\pi_1(\mathrm{S}),\PSL_n(\bR))/\PSL_n(\bR).
\]
We call $\Hit_n(\mathrm{S}$) the $\PSL_n(\bR)$-\emph{Hitchin component}.  

The \emph{mapping class group} $\Mod(\mathrm{S})$ is the group of isotopy classes of orientation-preserving homeomorphisms of $\mathrm{S}$, that is, $\Mod(\mathrm{S}):=\pi_0(\mathrm{Homeo^+}(\mathrm{S}))$.

We describe the action of $\Mod(\mathrm{S})$ on $\Hit_n(\mathrm{S})$. For this, choose a representative $f$ of $\phi \in \Mod(\mathrm{S})$. Let $f_\#$ be the automorphism of $\pi_1(\mathrm{S})$ induced by the homeomorphism $f$ of $\mathrm{S}$. Then the action of $\phi$ is defined by $\phi\cdot[\rho]=\phi_*([\rho])= [\rho \circ f_{\#}^{-1}]$ for $[\rho] \in \Hit_n(\mathrm{S})$. This action is known to be properly discontinuous and effective \cite[Theorem~1.0.2]{Lab_cross}.

The Hitchin--Riemann moduli space, denoted $\mathcal{M}_n(\mathrm{S})$, is defined as the quotient space of the Hitchin component under the action of the mapping class group; that is,
\[
\mathcal{M}_n(\mathrm{S})=\Hit_n(\mathrm{S})/\Mod(\mathrm{S}).
\]

Since $\Mod(\mathrm{S})$ is a countable group, we can apply the following lemma.
\begin{lemma}\label{lem:2.3}
    Let $G$ be a countable group acting properly discontinuously and effectively on a connected manifold $M$. Let $U$ be any nonempty open set of $M$. Then there is a nonempty open set $U' \subset U$ such that
    \begin{align*}
        \{g \in G \mid g(U') \cap U' \neq \emptyset \} = \{1\}.
    \end{align*}
\end{lemma}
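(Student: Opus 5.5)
We will find a point $x\in U$ whose stabilizer in $G$ is trivial, and then use proper discontinuity to shrink a neighborhood of $x$ until no nontrivial element of $G$ moves it to meet itself.

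\textbf{Step 1: a point of $U$ with trivial stabilizer.} For $g\in G$ let $\mathrm{Fix}(g)=\{y\in M \mid g(y)=y\}$; this is a closed subset of $M$. We claim that for $g\neq 1$ the set $\mathrm{Fix}(g)$ has empty interior. This is the main obstacle. Here effectiveness and properness must be combined with connectedness of $M$. Suppose $\mathrm{Fix}(g)$ had nonempty interior $W$.
\begin{itemize}
\item[(a)] Take $y\in\partial W$. By proper discontinuity there is a neighborhood $V$ of $y$ such that only finitely many $h\in G$ satisfy $h(V)\cap V\neq\emptyset$. Then $g$ fixes $y$, so $g$ has finite order on the stabilizer level.
\item[(b)] Using the standard fact that, for a properly discontinuous action, a small invariant neighborhood of $y$ is acted on by the finite stabilizer $G_y$, the set $\mathrm{Fix}(g)$ near $y$ is the fixed set of a finite-order homeomorphism.
\item[(c)] By Newman's theorem, a nontrivial finite-order homeomorphism of a connected manifold cannot fix a nonempty open set. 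Alternatively, in our smooth setting we may linearize: $\Mod(\mathrm{S})$ acts on $\Hit_n(\mathrm{S})$ by real-analytic diffeomorphisms, so the analytic set $\mathrm{Fix}(g)$, if it has interior, is all of $M$ by connectedness and analyticity.
\end{itemize}
Hence $\mathrm{Fix}(g)=M$ and $g$ acts trivially, contradicting effectiveness. In the written proof I would state the lemma with whichever of these two justifications the paper prefers; the analytic one suffices for the application.

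Since $G$ is countable and each $\mathrm{Fix}(g)$, $g\neq 1$, is closed with empty interior, the Baire category theorem (applied in the open set $U$, a Baire space) shows that
\begin{align*}
U\setminus \bigcup_{g\neq 1}\mathrm{Fix}(g)
\end{align*}
is nonempty. Choose $x$ in this set. Then $g(x)\neq x$ for all $g\neq 1$.

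\textbf{Step 2: shrink to $U'$.} By proper discontinuity, choose an open $V\ni x$ with $V\subset U$ such that the set $F=\{g \mid g(V)\cap V\neq\emptyset\}$ is finite. For each $g\in F\setminus\{1\}$ we have $g(x)\neq x$. Since $M$ is Hausdorff, there are disjoint neighborhoods $A_g\ni x$ and $B_g\ni g(x)$. By continuity of $g$ we may shrink $A_g$ so that $g(A_g)\subset B_g$, which gives $g(A_g)\cap A_g=\emptyset$.

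Set
\begin{align*}
U'=V\cap\bigcap_{g\in F\setminus\{1\}}A_g .
\end{align*}
This is a finite intersection, so $U'$ is an open neighborhood of $x$ contained in $U$.

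\textbf{Step 3: check the conclusion.} Let $g\neq 1$. If $g\notin F$, then $g(U')\cap U'\subset g(V)\cap V=\emptyset$. If $g\in F$, then $g(U')\cap U'\subset g(A_g)\cap A_g=\emptyset$. In both cases $g(U')\cap U'=\emptyset$, so $\{g\in G \mid g(U')\cap U'\neq\emptyset\}=\{1\}$, which is the statement.
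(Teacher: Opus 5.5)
The paper gives no proof of Lemma~\ref{lem:2.3}; it is quoted as a standard fact, so there is no argument of the authors to compare with. Your proof is correct and is the standard one. Baire category over the countably many closed sets $\mathrm{Fix}(g)$, $g\neq 1$, gives a point $x\in U$ with trivial stabilizer. Proper discontinuity and the Hausdorff property then let you shrink to a neighborhood $U'$ of $x$ with $g(U')\cap U'=\emptyset$ for all $g\neq 1$. Steps 2 and 3 are complete as written.

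Step 1 is the only place where effectiveness and connectedness enter, and it can be said more directly than in your items (a)--(b):
\begin{enumerate}
\item If $g\neq 1$ fixes some point $y$, then $g$ lies in the stabilizer $G_y$, which is finite by proper discontinuity, so $g$ is a periodic homeomorphism.
\item By effectiveness, $g$ is not the identity.
\item Newman's theorem says that a periodic homeomorphism of a connected manifold which is the identity on a nonempty open set is the identity. So $\mathrm{Fix}(g)$ has empty interior.
\end{enumerate}
There is no need to pick $y\in\partial W$. That boundary is empty when $W=M$, a case effectiveness excludes at once, so it would need separate mention.

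This step genuinely needs such an input. Effectiveness alone only gives $\mathrm{Fix}(g)\neq M$, and for disconnected $M$ the lemma is false: let $\mathbb{Z}/2$ act trivially on one component and by a reflection on another, and take $U$ in the first.

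Newman's theorem proves the lemma in the generality stated. Your analytic alternative needs the action to be by real-analytic maps of a connected analytic manifold. That holds for $\Mod(\mathrm{S})$ on $\Hit_n(\mathrm{S})$, but the paper also applies the lemma to $\Mod(\mathrm{S}_1,\partial\mathrm{S}_1)$ acting on $\Delta$. On that route you would have to verify analyticity there too, so ``the analytic one suffices for the application'' needs that extra check.
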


\subsection{Thick parts of the Hitchin components}
For $n=2$, $\Hit_2(\mathrm{S})$ is nothing but the Teichm\"uller space. In this setting, we can define the hyperbolic length of isotopy classes of essential simple closed curves in $\mathrm{S}$ by $
    \ell_\rho^{\mathrm{hyp}}(\gamma) = \log\left|\frac{\omega_1(\rho(\gamma))}{\omega_2(\rho(\gamma))}\right|$, where $\gamma \in \pi_1(\mathrm{S})\setminus\{1\}$, and $\omega_1(\rho(\gamma))$, $ \omega_2(\rho(\gamma))$ are eigenvalues of $\rho(\gamma)$ with $\omega_1(\rho(\gamma)) > \omega_2(\rho(\gamma))$. Similarly, we can define the length function for $\PSL_n(\bR)$-Hitchin representations.

\begin{definition}
    The \emph{length function} $\ell_\rho:\pi_1(\mathrm{S}) \to \bR$ with respect to $\left[\rho\right]\in  \Hit_n(\mathrm{S})$ is defined by 
    \[
    \ell_\rho(\gamma) = \log\left|\frac{\omega_{1}(\rho(\gamma))}{\omega_{n}(\rho(\gamma))}\right|
    \] 
    where $\omega_{1}(\rho(\gamma))$, $\omega_{n}(\rho(\gamma))$ are the largest and the smallest eigenvalues of $\rho(\gamma)$, respectively.
    
    Slightly abusing the notation, for any unoriented essential simple closed curve $\alpha$ in $\mathrm{S}$, we denote by $\ell_{\rho}(\alpha)$ the value $\ell_{\rho}(\alpha')$, where $\alpha'$ is an element of $\pi_1(\mathrm{S})$ that is freely isotopic to $\alpha$. This is well defined because the length function is invariant under conjugation and the orientation reversal. 
\end{definition}

This definition is a natural generalization of the hyperbolic length in the sense that for any Fuchsian representation $\rho=\tau \circ \rho_0$, we have $\ell_{\rho}(\gamma)=(n-1)\cdot  \ell_{\rho_0}^{\mathrm{hyp}}(\gamma)$.

Recall that, for a positive $\epsilon>0$, the $\epsilon$-thick part of the Teichm\"uller space $\mathcal{T}(\mathrm{S})$ is defined by
\[
\mathcal{T}^{\epsilon}(\mathrm{S}) := \{[\rho]\in \mathcal{T} (\mathrm{S}) \mid \ell_{\rho}^{\mathrm{hyp}}(\gamma)\ge \epsilon \ \text{for all essential closed curves} \ \gamma \ \text{in} \ \mathrm{S} \}.
\] 
Since $\mathcal{T}^\epsilon(\mathrm{S})$ is invariant under the mapping class group action, we can define the $\epsilon$-thick part of the Riemann moduli space to be $\mathcal{M}^\epsilon(\mathrm{S})= \mathcal{T}^\epsilon(\mathrm{S})/\Mod(\mathrm{S})$.

By replacing $\ell^{\mathrm{hyp}}_\rho$ with $\ell_\rho$, one can naturally extend these definitions to $\Hit_n(\mathrm{S})$. More precisely:
\begin{definition}\label{thick part}
    For a given $\epsilon >0$, the $\epsilon$-\emph{thick part} of the Hitchin component is defined by
    \[
        \Hit^{\epsilon}_{n}(\mathrm{S})  = \{\left[\rho\right] \in \Hit_n(\mathrm{S}) \mid  \ell_\rho(\gamma)\ge \epsilon \ \text{for all essential closed curves} \ \gamma \ \text{in} \ \mathrm{S}\}.
    \]
We also define the $\epsilon$-thick part of the Hitchin--Riemann moduli space to be 
    \[
\mathcal{M}^{\epsilon}_{n}(\mathrm{S})=\Hit^{\epsilon}_{n}(\mathrm{S})/ \Mod(\mathrm{S}).
    \]
\end{definition}

\subsection{Parametrizations}\label{parametrization}
In this paper, we construct an infinite volume open set in $\moduli^{\epsilon}_n(\mathrm{S})$ based on a parametrization of $\Hit_n(\mathrm{S})$ given by  \cite{Bonahon2014} and \cite{Zhang2015}. 

We only outline the parametrization. For a given \emph{closed} oriented surface $\mathrm{S}$, we first take an oriented pair-of-pants decomposition $\pants=\{\alpha_1, \cdots, \alpha_{3g-3}\}$ of $\mathrm{S}$ and extend it to a maximal geodesic lamination. For each $\alpha_i$, we associate a real analytic map $\pi_{b_i}:\Hit_n(\mathrm{S}) \to \mathfrak{a}^+$ defined by 
\[
\pi_{b_i}([\rho]) = \diag(\log|\omega_1(\rho(\alpha_i))|,\cdots, \log |\omega_n(\rho(\alpha_i))|)
\]
where
\[
\mathfrak{a}^+=\left\{\diag(d_1, \cdots, d_n)\mid \sum d_i = 0,\,d_1>d_2>\cdots >d_n\right\}.
\]
Also, for each $\alpha_i$, we assign $n-1$ gluing parameters, giving rise to a real analytic map $\pi_{s_i}:\Hit_n(\mathrm{S})\to \mathcal{G}_i=\bR^{n-1}$. 

For each pair-of-pants $P_i$, $i=1,2,\cdots, 2g-2$, one can find $n^2-1$ triangle-shear invariants satisfying certain linear equations as in \cite{Bonahon2014}. Precise definition of triangle-shear invariants is not important for our purposes. In \cite[Proposition 3.5]{Zhang2015}, the author systematically chooses $(n-1)(n-2)$ free parameters among these triangle-shear invariants obtaining an analytic map $\pi_{P_i}:\Hit_n(\mathrm{S}) \to \mathcal{I}_i=\bR^{(n-1)(n-2)}$, for $i=1,2,\cdots, 2g-2$. 

By combining these maps $\pi_{b_i}$, $\pi_{s_i}$, and $\pi_{P_i}$, we have  a real analytic diffeomorphism 
    \[\Xi:\Hit_n(\mathrm{S}) \to (\mathfrak{a}^+)^{(3g-3)} \times \mathcal{I}\times  \mathcal{G},
    \]
where $\mathcal{I}=\mathcal{I}_1\times\cdots\times\mathcal{I}_{2g-2}$ and $\mathcal{G}=\mathcal{G}_1\times\cdots\times\mathcal{G}_{3g-3}$.

\subsection{Restriction of the Hitchin component}\label{restriction} 
Let $\alpha$ be an oriented, essential, simple closed, non-separating curve on $\mathrm{S}$. Let $\nu(\alpha)$ be a tubular neighborhood of $\alpha$, and set $\mathrm{S}_1 = \mathrm{S} \setminus \nu( \alpha )$. Let $\alpha^+$ and $\alpha^-$ be the two distinct boundary components of $\mathrm{S}_1$, oriented such that the quotient map $q : \mathrm{S}_1 \to \mathrm{S}$ restricts to orientation-preserving homeomorphisms from $\alpha^+$ and $\alpha^-$ to $\alpha$.
Then the fundamental group can be presented as an HNN extension
\[
\pi_1(\mathrm{S}) = \langle \pi_1(\mathrm{S}_1), v \mid v[\alpha^+]v^{-1} = [\alpha^-] \rangle,
\]
where $[\alpha^+]$ and $[\alpha^{-}]$ are elements of $\pi_1(\mathrm{S}_1)$ that are freely homotopic to $\alpha^+$ and $\alpha^-$ respectively, and where $v$ may be represented by an essential oriented simple closed curve in $\mathrm{S}$ that intersects $\alpha$ exactly once positively.

The inclusion map $i : \pi_1(\mathrm{S}_1) \to \pi_1(\mathrm{S})$ induces the map 
\[\tilde{\mathcal{R}} : \Hom_{\mathrm{H}}(\pi_1(\mathrm{S}), \PSL_n({\bR})) \to \Hom_{\mathrm{H}}(\pi_1(\mathrm{S}_1), \PSL_n({\bR}))\] 
defined by $\tilde{\mathcal{R}}(\rho)  = \rho \circ i$. Let $\tilde{\Delta}$ be the image of $\tilde{\mathcal{R}}$ in $\Hom_{\mathrm{H}}(\pi_1(\mathrm{S}_1),\PSL_n({\bR}))$. This again yields the restriction map
\[\mathcal{R} : \Hit_n(\mathrm{S}) \to \Delta = \tilde{\Delta}/ \PSL_n(\bR).
\]
Indeed, the space $\Delta$ is given by
\[
\Delta=\{[\rho]\in \Hit_n(\mathrm{S}_1)\mid [\rho(\alpha^+)] = [\rho(\alpha^-)]\}.
\]
Assuming that $\mathrm{S}$ is closed, the parametrization in Section~\ref{parametrization} implies that $\mathcal{R}$ is an open map and that $\Delta$ is a connected smooth manifold. 

Let us define 
\[
\Mod(\mathrm{S}_1, \partial{\mathrm{S}_1}):=\pi_0(\{\phi \in \mathrm{Homeo}^+(\mathrm{S}_1) \mid \phi(\partial \mathrm{S}_1)=\partial \mathrm{S}_1\}).
\]
We know that $\Mod(\mathrm{S}_1,\partial \mathrm{S}_1)$ acts on $\Delta$ properly discontinuously. See \cite[Section~2.5]{Choi2025volume} for more details. 

\subsection{Goldman flows}
Let $\alpha$ and $\gamma$ be essential, oriented simple closed curves on $\mathrm{S}$. Geometrically, a Goldman flow is obtained by cutting $\mathrm{S}$ along
$\alpha$ and regluing the resulting boundary components after twisting by an
element of the centralizer of the holonomy of $\alpha$.

Suppose that $\alpha$ is non-separating. As in Section~\ref{restriction},
the fundamental group can be presented as an HNN extension
\[
\pi_1(\mathrm{S})
=
\left\langle
\pi_1(\mathrm{S}_1),v
\;\middle|\;
v[\alpha^+]v^{-1}=[\alpha^-]
\right\rangle.
\]

Let $[\rho]\in\Hit_n(\mathrm{S})$. Choose a representation $\rho$ in the conjugacy class $[\rho]$ so that $\rho(\alpha)$ is diagonal. Choose
\[
Q\in
\mathfrak z_{\mathfrak{sl}_n(\mathbb R)}(\rho(\alpha))
=
\left\{
X\in\mathfrak{sl}_n(\mathbb R)
\;\middle|\;
\rho(\alpha)X\rho(\alpha)^{-1}=X
\right\}.
\]
Since $\exp(Q)$ commutes with $\rho(\alpha)$, we can find a representation $\rho_Q:\pi_1(\mathrm{S})\to\PSL_n(\mathbb R)$ satisfying
\[
\rho_Q(\gamma)
=
\begin{cases}
\rho(\gamma) & \gamma\in\pi_1(\mathrm{S}_1) \\
\rho(v)\exp(Q) & \gamma=v
\end{cases}.
\]
The map
\[
\Phi^{Q,\alpha}:\Hit_n(\mathrm{S})\longrightarrow\Hit_n(\mathrm{S})
\]
defined by $\Phi^{Q,\alpha}([\rho]) = [\rho_Q]$ preserves the defining relations of $\pi_1(\mathrm{S})$ so $\Phi^{Q,\alpha}([\rho])$ is again a representation. The flow
\[
\Phi_t ^{Q,\alpha} = \Phi^{tQ, \alpha}
\]
is called a \emph{Goldman flow} along $\alpha$, originally introduced in \cite{Goldman86hamiltonian}.

\begin{definition}
Fix a matrix
\[
Q=
\begin{cases}
\diag(1,-2,1) & n=3 \\
\diag(1,0, \cdots,-1) & n\geq 4
\end{cases}
\]
in $\mathfrak{z}_{\mathfrak{sl}_n(\bR)}(\rho(\alpha))$ and let $t \in \bR$. The \emph{bulging flow} along $\alpha$ is the one-parameter family of maps
\[
\Phi_t : \Hit_n(\mathrm{S}) \to \Hit_n(\mathrm{S})
\]
defined by
\[
\Phi_t ([\rho]) = \Phi^{tQ,\alpha}([\rho]).
\]
\end{definition}
Goldman \cite{Goldman86hamiltonian} showed that  $\Phi_t$ preserves the Atiyah--Bott--Goldman volume on $\Hit_n(\mathrm{S})$.

\subsection{Internal sequences}

In this section, we introduce one of the key ingredients of the proof. For  technical reasons we will assume that $\mathrm{S}$ is closed for this subsection.

Let $\pants$ denote an oriented pair-of-pants decomposition $\pants=\{\alpha_1, \cdots, \alpha_{3g-3}\}$ of $\mathrm{S}$.

\begin{definition}[\cite{Zhang2015}]\label{internal} A sequence $\{\left[\rho_i\right]\} \in \Hit_n(\mathrm{S})$ is called an \emph{internal sequence} if
\begin{enumerate}
    \item There exist two positive constants $D_1$, $D_2$ such that for all $\gamma\in \Gamma_\pants$, 
    \begin{align*}
        D_1<\lambda_k(\rho_i(\gamma))-\lambda_{k+1}(\rho_i(\gamma))<D_2
    \end{align*}
for all $1\leq k \leq n-1$ and all $i\geq 0$. For here, $\Gamma_\pants$ is the set of elements in the  fundamental group represented by the members of $\pants$, and $\lambda_k=\log |\omega_k|$ is the log of the $k$-th largest eigenvalue of the matrix.
    \item For each $1\leq j \leq 2g-2$ and for any compact subset $H \subset \mathcal{I}_j$, there exists some integer $N$ such that $\pi_{P_j}(\rho_i) \notin H$ for $i>N$.
\end{enumerate}
\end{definition}

    Now, we briefly introduce two quantities $r(\psi(\gamma))$ and $K(\rho)$, which are discussed in \cite{Zhang2015}. The quantity $r(\psi(\gamma))$ captures the combinatorial crossing behavior of the closed geodesic associated with $\gamma$, relative to the fixed pants decomposition $\pants$. Roughly speaking, it measures the frequency with which the geodesic crosses between collar neighborhoods of pants curves. The quantity $K(\rho)$ measures the minimal length contribution arising from crossing between adjacent pairs of pants.
     
\begin{theorem}[Theorem 4.19 of \cite{Zhang2015}]\label{thm:zhang1}
    Pick $\left[\rho\right] \in \Hit_n(\mathrm{S})$, $\gamma\in \pi_1(\mathrm{S})$ such that $\gamma \notin \langle A \rangle$ for every $A \in \Gamma_\pants$.
    Then,
    \[
        \ell_{\rho}(\gamma) \geq r(\psi(\gamma))\frac{K([\rho])}{11}.
    \]
    \end{theorem}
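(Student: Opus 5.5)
The statement is Theorem 4.19 of \cite{Zhang2015}, which the paper quotes without proof; what follows is how I would try to reprove it. The plan is to rewrite $\ell_\rho(\gamma)$ as a sum of positive cross-ratio terms, one for each crossing of a pants curve, and to bound each term below by $K([\rho])$.

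\emph{Step 1 (length as a cross ratio).} Let $\xi,\xi^*$ be the Labourie limit curves of $\rho$, which are positive Frenet curves. For a primitive $\gamma$ with attracting and repelling fixed points $\gamma^{\pm}\in\partial_\infty\pi_1(\mathrm{S})$, take the Labourie cross ratio $B_\rho$. Then for any $x\neq\gamma^\pm$,
\[
\ell_\rho(\gamma)=\log B_\rho\bigl(\gamma^+,\gamma^-,x,\gamma x\bigr),
\]
since the period of $\log|\omega_1/\omega_n|$ is the cross-ratio period in the $(1,n-1)$ flag pair.

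\emph{Step 2 (cut into pieces).} The normal form $\psi(\gamma)$ records the lifts $\tilde\alpha_{j_1},\dots,\tilde\alpha_{j_m}$ of pants curves that the axis of $\gamma$ crosses in one period. We choose points $x=x_0,x_1,\dots,x_m=\gamma x$ on the boundary arc between $\gamma^-$ and $\gamma^+$, one endpoint of each crossed lift. The multiplicative cocycle identity for $B_\rho$ then gives
\[
\ell_\rho(\gamma)=\sum_{k}\log B_\rho\bigl(\gamma^+,\gamma^-,x_{k},x_{k+1}\bigr).
\]
Positivity of the Frenet curve, which in Fock--Goncharov terms means all the relevant cross ratios exceed $1$, makes every summand nonnegative. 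So we may discard pieces freely and keep only a subcollection.

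\emph{Step 3 (uniform lower bound on good pieces).} A piece between two consecutive crossings that pass through a full pair of pants, entering along one boundary and leaving along another, is compared with a configuration of four flags. These flags are determined by $\rho$ restricted to that pair of pants and the two adjacent collars. Using the monotonicity of $B_\rho$ in nested intervals, which again follows from positivity, each such term is at least the minimum over the finitely many combinatorial types of such crossings. This minimum is essentially how $K([\rho])$ is defined.

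\emph{Step 4 (counting good pieces).} The hypothesis $\gamma\notin\langle A\rangle$ guarantees that $\gamma$ genuinely crosses pants. A combinatorial count on $\psi(\gamma)$ should then show that at least $r(\psi(\gamma))/11$ of the pieces are good in the sense of Step 3. Here one must group consecutive crossings, since twisting around a collar may produce pieces that do not cross a full pair of pants, and allow bounded overlap between windows; this is where the constant $11$ comes from. Summing the bounds gives $\ell_\rho(\gamma)\ge r(\psi(\gamma))K([\rho])/11$.

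The main obstacle is Step 3. We need a lower bound for a cross ratio whose four flags are not fixed but vary with the amount of twisting, so the bound must be uniform in the twist parameters. I would first try to use the nesting of the intervals, which follows from positivity, to bound each term below by a cross ratio depending only on the pair-of-pants data and the eigenvalue gaps of the boundary holonomies. This quantity would then be taken as the definition of $K([\rho])$.
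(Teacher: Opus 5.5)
The paper does not prove this statement; it quotes it from Zhang. Your sketch therefore has to stand on its own, and it has a genuine gap.

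Steps 1 and 2 are fine up to ordering conventions. The period of the Labourie cross ratio is $\log|\omega_1/\omega_n|$. The lifts of pants curves crossed by the axis are disjoint and hence nested. The cocycle identity together with positivity writes $\ell_\rho(\gamma)$ as a sum of nonnegative terms.

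\textbf{First gap: curves that cross no pants curve.} Step 4 asserts that $\gamma\notin\langle A\rangle$ forces $\gamma$ to cross pants curves, and this is false. Take a pair of pants $P$ of $\pants$ with boundary elements $a_1,a_2$ generating $\pi_1(P)$.
\begin{itemize}
\item The figure-eight element $a_1a_2^{-1}$ satisfies the hypothesis but has zero intersection with every $\alpha_i$.
\item So your decomposition has no pieces and gives only $\ell_\rho(\gamma)\ge 0$.
\item The theorem must give a positive bound here. Theorem~\ref{thm:zhang2}, which is deduced from it, asserts $\ell_{\rho_i}(\gamma)\to\infty$ for exactly such $\gamma$.
\item The paper needs this when it places $N_I$ in the thick part, whose definition involves all essential closed curves, not only simple ones.
\end{itemize}
This is why $r(\psi(\gamma))$ counts passages between collar neighbourhoods rather than crossings of $\pants$. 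To detect these passages with cross ratios, you must also cut along lifts of the lamination leaves that spiral onto the pants curves. Consecutive crossed leaves then often share an ideal endpoint, and the comparison cross ratio of a single piece can be exactly $1$. A definite contribution appears only after grouping runs of such crossings. That grouping is the real content behind $\psi(\gamma)$, $r$ and the constant $11$, and your Step 4 leaves it as a hope. With your actual cut points every piece already lies in one pants lift, so the ``bad pieces'' you anticipate do not even arise there.

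\textbf{Second gap: $K$ is defined after the fact.} Step 3 defines $K([\rho])$ as whatever the comparison produces. The statement, however, concerns Zhang's specific $K$. Its whole role in this paper is Theorem~\ref{thm:zhang2}: $K([\rho_i])\to\infty$ when the Bonahon--Dreyer internal parameters leave every compact set while the eigenvalue gaps along $\pants$ stay in $(D_1,D_2)$. An infimum of cross ratios over configurations in a pants carries no such information. Those configurations are also not finitely many:
\begin{itemize}
\item Pairs of boundary lifts of a pants lift, modulo $\pi_1(P)$, correspond to arcs that may wind arbitrarily often around the third boundary.
\item $\gamma$ need not be simple.
\item For fixed $\rho$ a compactness argument still gives a positive minimum, but it is not uniform in $\rho$.
\end{itemize}

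\textbf{Twisting is not the real obstacle.} Monotonicity gives $B_\rho(\gamma^+,\gamma^-,x_k,x_{k+1})\ge B_\rho(y_{k+1},y_k,x_k,x_{k+1})$, where $y_k$ is the other endpoint of the $k$-th crossed lift. The right-hand side depends only on a pair of boundary lifts of one pants lift. It is invariant under the boundary stabilisers and does not see the gluing parameters.

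What is missing is an explicit lower bound for these cross ratios in terms of triangle invariants that blows up along internal sequences. Without it, even a completed version of your argument proves a different inequality, one that the paper cannot use.
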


The following result is essential for our proof.

\begin{theorem}[Theorem 5.1 of \cite{Zhang2015}]\label{thm:zhang2}
    Let $\{\left[\rho_i\right]\}_{i=1}^\infty$ be an internal sequence in $\Hit_n(\mathrm{S})$. Then \[
    \lim_{i\to\infty}K(\left[\rho_i\right]) = \infty
    \]
    Therefore, \[
    \ell_{\rho_i}(\gamma) \to \infty \quad \text{as} \quad i\to\infty
    \] for any $\gamma \notin \langle A \rangle$ for every $A \in \Gamma_\pants$.
\end{theorem}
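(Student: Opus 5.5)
The plan is to prove the two assertions of Theorem~\ref{thm:zhang2} in order. The second follows quickly from the first together with Theorem~\ref{thm:zhang1}, so almost all the work goes into showing $K([\rho_i])\to\infty$.

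\emph{Step 1: the second assertion from the first.} Let $\gamma\notin\langle A\rangle$ for every $A\in\Gamma_\pants$. Then the closed geodesic representing $\gamma$ is not a power of a pants curve. It must therefore leave the collar of some pants curve and cross from one collar neighbourhood to another at least once. Hence $r(\psi(\gamma))\ge 1$. This combinatorial quantity depends only on $\gamma$ and $\pants$, not on $i$. Theorem~\ref{thm:zhang1} then gives
\[
\ell_{\rho_i}(\gamma)\ \ge\ r(\psi(\gamma))\,\frac{K([\rho_i])}{11}\ \ge\ \frac{K([\rho_i])}{11},
\]
which tends to $\infty$ once $K([\rho_i])\to\infty$ is known.

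\emph{Step 2: $K\to\infty$ by contradiction and compactness.} Suppose instead that, after passing to a subsequence, $K([\rho_i])\le C$ for all $i$. Since $K$ is a minimum over finitely many types of crossing arcs, we may pass to a further subsequence on which the minimum is realised by one fixed arc type. This arc crosses a fixed pair of pants $P_j$ between two fixed boundary curves, and its length contribution stays bounded.

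This length contribution is computed from the restriction of $\rho_i$ to $\pi_1(P_j)$, normalised by conjugation. With that normalisation it depends only on:
\begin{itemize}
\item the boundary data $\pi_{b}$ of the three cuffs of $P_j$, and
\item the internal parameters $\pi_{P_j}(\rho_i)$.
\end{itemize}
Any dependence on gluing parameters is absorbed by choosing the basepoint on the arc itself. By condition (1) of Definition~\ref{internal}, the consecutive eigenvalue gaps of the cuffs lie in $(D_1,D_2)$. Since the eigenvalues sum to zero, each cuff's boundary data lies in a fixed compact subset of $\mathfrak a^+$.

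\emph{Step 3 (the main obstacle): properness of the crossing length in the internal parameters.} I must show the following. For boundary data ranging in a compact subset of $(\mathfrak a^+)^3$, the crossing-length function on the restricted pants space $\mathcal I_j$ is proper, that is, it tends to $\infty$ as $\pi_{P_j}\to\infty$.

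The approach is to use Bonahon--Dreyer positivity. The $(n-1)(n-2)$ internal parameters are logarithms of triple ratios and shear-type invariants of the positive triples of limit flags at the vertices of the ideal triangles of $P_j$. I would express the displacement of the relevant crossing element, namely a product of a cuff holonomy and a transverse element of $\pi_1(P_j)$, in these coordinates. I would then bound it below by a sum of absolute values of these invariants minus a constant that depends only on the boundary data. The tool for this bound is the standard estimate of $\log(\omega_1/\omega_n)$ by products of positive matrices in Fock--Goncharov form: all entries are positive, so no cancellation occurs and the largest parameter dominates.

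Granting this bound, $K([\rho_i])\le C$ forces $\pi_{P_j}(\rho_i)$ to stay in a compact set $H\subset\mathcal I_j$. This contradicts condition (2) of Definition~\ref{internal}, so $K([\rho_i])\to\infty$.

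The delicate points are twofold. First, the parameters chosen in \cite[Proposition~3.5]{Zhang2015} are a linear reparametrisation of all the triangle invariants, so they are subject to the linear relations; I must check that unboundedness of the free parameters forces some positive invariant entering the crossing product to blow up. Second, both directions $\pm\infty$ of each invariant must be handled, and I expect to do this by using both orientations of the arc.
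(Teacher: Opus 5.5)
The paper gives no proof of this statement; it imports it from \cite{Zhang2015}, so your proposal has to stand on its own. Step~1 is fine: it is exactly how the paper passes from $K\to\infty$ to $\ell_{\rho_i}(\gamma)\to\infty$, using $r(\psi(\gamma))\ge 1$. Step~2 is a plausible reduction, but it rests on guesses about the definition of $K$ that you have not checked. You assume $K$ is a minimum over finitely many arc types, each depending only on $\rho|_{\pi_1(P_j)}$ and insensitive to the gluing parameters; note that the paper describes $K$ as measuring crossings between \emph{adjacent} pairs of pants. The genuine gap is Step~3. The assertion that, with cuff data in a compact set, the crossing quantity is proper on $\mathcal{I}_j$ is not an intermediate lemma: it is essentially Theorem~5.1 of \cite{Zhang2015} itself, and your proposal only grants it (``Granting this bound'').

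Moreover, the mechanism you offer for Step~3 cannot work as stated. Positivity of matrix entries does not yield ``no cancellation, so the largest parameter dominates'', and the cuffs show why. The eigenvalue gaps $\lambda_k-\lambda_{k+1}$ of each cuff of $P_j$ are determined linearly by the same triangle and shear invariants of $P_j$ (Bonahon--Dreyer's closed-leaf equalities). Along an internal sequence these invariants leave every compact set, yet by condition~(1) of Definition~\ref{internal} the cuff lengths stay bounded. So divergence of the invariants of $P_j$ is compatible with bounded $\log|\omega_1/\omega_n|$ for elements of $\pi_1(P_j)$. Any lower bound by the size of the invariants must therefore single out the crossing elements and exploit precisely the linear relations that bounded boundary data impose. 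You list this as a ``delicate point'' to check, but it is the whole difficulty.

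Your plan for the $\pm\infty$ issue also fails. Since $\log|\omega_1/\omega_n|$ is invariant under $g\mapsto g^{-1}$, reversing the orientation of the arc returns the same quantity and gives no leverage on invariants tending to $-\infty$. What is missing is an actual analysis covering every direction in which $\pi_{P_j}(\rho_i)\to\infty$ with bounded cuff data. It must show that the flags at the endpoints of lifts of two cuffs degenerate so that the crossing quantity blows up. That analysis is the substance of Zhang's proof, and your proposal does not supply it.
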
 
Thanks to Theorem~\ref{thm:zhang2}, we can make the length of any closed curve not isotopic to a curve in the pants decomposition arbitrarily large without contracting the lengths of elements in $\Gamma_\pants$. 

\section{Proof of the Main Theorem}
In this section we prove the main theorem. Let $\mathrm{S}$ be a closed oriented surface of genus $g>1$, and let $\Gamma=\pi_1(\mathrm{S})$ be the fundamental group of $\mathrm{S}$. Let us fix an oriented pair-of-pants decomposition $\pants=\{\alpha_1, \cdots, \alpha_{3g-3}\}$ of $\mathrm{S}$ that contains a non-separating, oriented essential simple closed curve $\alpha$.

Our proof closely follows the argument of \cite{Choi2025volume}. While the collar lemma, \cite[Theorem~4.2]{Choi2025volume} can be applied to the thin part, it is not applicable to the thick part. Therefore, we construct a nice open set $N$ of \cite[Proposition~4.3]{Choi2025volume} by using an internal sequence~\ref{internal}.

\begin{lemma}\label{prop:main}
    Let $\mathrm{S}$ be a closed oriented surface of genus $g>1$. Let 
    \[
    \Xi:\Hit_n(\mathrm{S}) \to (\mathfrak{a}^+)^{(3g-3)} \times \mathcal{I} \times \mathcal{G}
    \]
    be the parametrization obtained by $\pants$ and a maximal geodesic lamination as in Section~\ref{parametrization}. Then $\mathcal{I}$ is invariant pointwise under the bulging flow $\Phi_t:\Hit_n(\mathrm{S}) \to \Hit_n(\mathrm{S})$ along any non-separating curve $\alpha$ in $\pants$.
\end{lemma}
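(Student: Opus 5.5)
The plan is to prove that each coordinate map $\pi_{P_j}$ is constant along the flow, i.e.\ $\pi_{P_j}\circ\Phi_t=\pi_{P_j}$ for $j=1,\dots,2g-2$. The key observation is that the triangle invariants of \cite{Bonahon2014}, restricted to a pair of pants $P_j$, depend only on the restriction of $\rho$ to $\pi_1(P_j)$ up to conjugation. More precisely, they depend only on the flags at the endpoints of lifts of the leaves of the maximal lamination lying inside $P_j$. These leaves are the boundary geodesics of $P_j$ and the spiralling leaves that cut $P_j$ into two ideal triangles. Their endpoints are all limit points of $\pi_1(P_j)\subset\pi_1(\mathrm{S})$, so the relevant flags are the attracting and repelling flags of elements of $\rho(\pi_1(P_j))$ and are determined by $\rho|_{\pi_1(P_j)}$.

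First I would show that $\rho_{tQ}|_{\pi_1(P_j)}$ is conjugate to $\rho|_{\pi_1(P_j)}$ for every pair of pants $P_j$ in $\pants$. Every $P_j$ lies in $\mathrm{S}_1=\mathrm{S}\setminus\nu(\alpha)$, since $\alpha$ is a pants curve. By choosing basepoints and paths suitably, $\pi_1(P_j)$ is carried, up to conjugation in $\pi_1(\mathrm{S})$, either into $\pi_1(\mathrm{S}_1)$ or into $v\pi_1(\mathrm{S}_1)v^{-1}$. In the first case $\rho_{tQ}=\rho$ on $\pi_1(P_j)$ by definition. In the second case the restriction is conjugated by $\rho(v)\exp(tQ)$ instead of $\rho(v)$. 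This changes it only by an overall conjugation by $\rho(v)\exp(tQ)\rho(v)^{-1}$, which does not affect conjugation-invariant data.

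Second, I would invoke the locality of Zhang's coordinates. The $(n-1)(n-2)$ free parameters chosen in \cite[Proposition 3.5]{Zhang2015} are triangle invariants of the two ideal triangles in $P_j$, together with shears along the interior infinite leaves of $P_j$. Each of these is a projective invariant, such as a triple ratio or a double ratio, of flags $\xi_\rho(x)$ where $x$ ranges over vertices of lifted ideal triangles inside $\widetilde{P_j}$. By the first step, the limit map restricted to $\partial_\infty\pi_1(P_j)$ changes only by a projective transformation. Hence every such invariant is unchanged, and so $\pi_{P_j}(\Phi_t[\rho])=\pi_{P_j}([\rho])$.

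I expect the main obstacle to be the second step. The parameters in $\mathcal{I}_j$ must be genuinely internal to $P_j$; the worry is that some chosen shear lies along a leaf crossing $\alpha$, or that the normalization of the gluing parameters in $\mathcal{G}$ is mixed into $\mathcal{I}$. To settle this I would check Zhang's construction directly. The spiralling leaves of the maximal lamination extending $\pants$ never cross pants curves; they spiral toward them. The $\mathcal{I}_j$ coordinates involve only triangles and leaves in $P_j$, while all twisting across $\alpha$ is recorded in $\mathcal{G}$, and the bulging flow moves only those coordinates.

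For $n=3$ the matrix $Q$ is $\diag(1,-2,1)$, which I read as the centralizer element $\diag(1,-2,1)$ up to the paper's convention. This detail is irrelevant to the argument, which uses only that $\exp(tQ)$ centralizes $\rho(\alpha)$.
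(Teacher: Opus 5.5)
Your proposal is correct and follows essentially the same route as the paper. The internal triangle and shear invariants of $P_j$ are projective invariants of the flags at the vertices of lifted ideal triangles in $\widetilde{P_j}$. Those vertices are limit points of $\pi_1(P_j)\subset\pi_1(\mathrm{S}_1)$, and on $\pi_1(\mathrm{S}_1)$ the bulging flow leaves $\rho$ unchanged, so the Frenet curve is unchanged there and the invariants are fixed. Your extra care about which conjugate of $\pi_1(P_j)$ to use, and about which Bonahon--Dreyer invariants Zhang places in $\mathcal{I}_j$ rather than $\mathcal{G}$, only makes explicit what the paper's proof takes for granted.
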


\begin{proof}
     Let $[\rho]\in\Hit_n(\mathrm{S})$. A Goldman flow $\Phi_t$ along $\alpha$ maps $[\rho]$ to $\Phi_t([\rho])\in\Hit_n(\mathrm{S})$. Then there are two Frenet curves $\xi_\rho$ and $\xi_{\Phi_t(\rho)}$ corresponding to $\rho$ and $\Phi_t(\rho)$, respectively. Let $\mathrm{S}_1=\mathrm{S} \setminus \nu(\alpha)$ where $\nu(\alpha)$ is a tubular neighborhood of $\alpha$. Regard $\pi_1(\mathrm{S}_1)$ as a subgroup of $\pi_1(\mathrm{S})$. Consider the restricted Frenet curves $\xi_\rho |_{ \partial \pi_1(\mathrm{S}_1)}$ and $\xi_{\Phi_t(\rho)}|_{\partial \pi_1(\mathrm{S}_1)}$. Since the Goldman flow does not change the representation $\rho|\pi_1(\mathrm{S}_1)$, we see that $\xi_\rho |_{ \partial \pi_1(\mathrm{S}_1)}=\xi_{\Phi_t(\rho)}|_{\partial \pi_1(\mathrm{S}_1)}$.

     Let $P$ be a pair-of-pants chosen from the pants decomposition $\pants$. Note that the ideal vertices, denoted $\mathcal{V}$, of ideal triangles associated to $P$ are all contained in $\partial \pi_1(\mathrm{S}_1)$ since $\pi_1(P) \subset \pi_1(\mathrm{S}_1)$. Thus, the images of vertices in $\mathcal{V}$ under $\xi_\rho$ and $\xi_{\Phi_t(\rho)}$ are the same. Since the triangle-shear invariants of $P$ are determined by the image of $\mathcal{V}$ (see \cite[Section 2]{Bonahon2014}), the internal parameters are invariant pointwise under $\Phi_t$.
\end{proof}

\begin{lemma}\label{lem:key}
Let $\mathrm{S}$ be a closed oriented surface of genus $g>1$. Let $\alpha$ be an oriented non-separating essential simple closed curve on $\mathrm{S}$. Let $\epsilon>0$. Then there exists a nonempty relatively compact open set $N$ contained in the $\epsilon$-thick part $\Hit^\epsilon_n(\mathrm{S})$ satisfying the following properties provided that $n>2$.   
\begin{enumerate}
    \item\label{inthick} $\Phi_t(N)\subset \Hit^\epsilon _n(\mathrm{S})$ for all $t\in \bR$.
    \item\label{nondegenerate} For any $t,s \in \bR$, $\left[\rho_1\right],\left[\rho_2\right] \in N$ and any $i=1,2, \cdots , 3g-3$, there exists a positive real number $\zeta>0$ which satisfies 
    \begin{align*}
        |\ell_{\Phi_t(\left[\rho_1\right])}(\alpha_i)-\ell_{\Phi_s{(\left[\rho_2\right])}}(\gamma)|>\zeta
    \end{align*}
    for every essential simple closed curve $\gamma$ that is not isotopic to $\alpha_i$.
    \item\label{fundomain1} $\{\phi \in \Mod \left(\mathrm{S}\right) \mid \phi_{*}\left(N\right) \cap N \neq \emptyset \} = \{ 1 \}$. 
    \item\label{fundomain2} Let $\mathrm{S}_1=\mathrm{S} \setminus \nu(|\alpha|)$.
    \[
    \{ \phi \in \Mod(\mathrm{S}_1, \partial \mathrm{S}_1) \mid \phi_*(\mathcal{R}(N)) \cap \mathcal{R}(N) \neq \emptyset \} = \{ 1 \}
    \]
    where $\mathcal{R}$ is the restriction map defined in Section~\ref{restriction}.
\end{enumerate}
\end{lemma}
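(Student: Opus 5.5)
We work throughout in the coordinates $\Xi$ of Section~\ref{parametrization} adapted to the fixed pants decomposition $\pants\ni\alpha$. The plan is to fix the eigenvalue data of the pants curves, push the internal coordinates far out, and keep the gluing coordinates bounded.

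\textbf{Choice of the base point.} Fix a point $b_0\in(\mathfrak a^+)^{3g-3}$ whose entries are generic: the gaps $d_k-d_{k+1}$ lie in a fixed interval $(D_1,D_2)$, and the values $\ell(\alpha_i)=d_1-d_n$ are pairwise distinct and all exceed $2\epsilon$. Fix a bounded open set $G_0\subset\mathcal G$. Choose an unbounded sequence $I_m\in\mathcal I$, meaning that each component $\mathcal I_j$ eventually leaves every compact set. This requires $\dim\mathcal I_j=(n-1)(n-2)>0$, and this is exactly where $n>2$ is used.

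\textbf{Step 1: uniform length bound.} Take small balls $B$ around $b_0$ and $B_m$ around $I_m$, and set $N_m=\Xi^{-1}(B\times B_m\times G_0)$. We show that for $m$ large, every non-pants curve has length larger than any prescribed constant $L$ uniformly on $\bigcup_t\Phi_t(N_m)$. The bulging flow along $\alpha$ changes only the gluing coordinate of $\alpha$. It leaves $\pi_{b_i}$ unchanged because $\rho|_{\pi_1(\mathrm S_1)}$ is unchanged, and it leaves $\mathcal I$ unchanged by Lemma~\ref{prop:main}. Hence every sequence $[\rho_k]\in\Phi_{t_k}(N_{m_k})$ with $m_k\to\infty$ is an internal sequence in the sense of Definition~\ref{internal}, with uniform constants $D_1,D_2$ and with the gluing data allowed to be arbitrary.

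By Theorem~\ref{thm:zhang2} we get $K([\rho_k])\to\infty$. Arguing by contradiction along such sequences, $\inf K$ over $\bigcup_t\Phi_t(N_m)$ tends to $\infty$ as $m\to\infty$. Theorem~\ref{thm:zhang1}, together with $r(\psi(\gamma))\ge1$, then gives $\ell(\gamma)\ge L$ for every $\gamma$ that is not a power of a pants curve. Powers of pants curves have length at least the length of the curve itself, which is at least $2\epsilon$.

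\textbf{Step 2: properties (\ref{inthick}) and (\ref{nondegenerate}).} Choose $L$ larger than $\epsilon$ and larger than all values $\ell(\alpha_i)$ on $B$ plus a margin. Property (\ref{inthick}) follows immediately. For (\ref{nondegenerate}), the lengths $\ell(\alpha_i)$ are constant along the flow and nearly constant on $B$, and they are separated from each other by the genericity of $b_0$. Every other simple curve has length at least $L$. So shrinking $B$ yields $\zeta$.

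\textbf{Step 3: properties (\ref{fundomain1}) and (\ref{fundomain2}).} Lemma~\ref{lem:2.3} gives a nonempty open subset of $N_m$ on which $\Mod(\mathrm S)$ acts with trivial stabilizer, which gives (\ref{fundomain1}). For (\ref{fundomain2}), the map $\mathcal R$ is open and $\Mod(\mathrm S_1,\partial\mathrm S_1)$ acts properly discontinuously on the connected manifold $\Delta$. Applying Lemma~\ref{lem:2.3} on $\Delta$ to an open subset of $\mathcal R(N)$ and pulling back by $\mathcal R$ gives the required subset. One must check that the action is effective, or else pass to the effective quotient, which changes nothing essential. Properties (\ref{inthick}) and (\ref{nondegenerate}) pass to open subsets. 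Relative compactness holds because we use closures of balls in the $\Xi$-coordinates.

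\textbf{Main obstacle.} The main difficulty is the uniformity in Step 1. Zhang's Theorem~\ref{thm:zhang2} is a statement about sequences, whereas we need a bound over the full flow orbit of an open set. The sequential contradiction argument handles this. It relies on the fact that Definition~\ref{internal} imposes no condition on the gluing parameters, together with the invariance of $\mathcal I$ and of the eigenvalue data under $\Phi_t$.
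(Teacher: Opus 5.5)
Your proposal is correct and follows essentially the same route as the paper. You fix the $\mathfrak{a}^+$-data near a point with well-separated pants lengths and push the internal coordinates to infinity, so that Theorems~\ref{thm:zhang1} and~\ref{thm:zhang2}, via a sequential contradiction argument, make every non-pants curve uniformly long. You then use invariance of the $\mathfrak{a}^+$ and $\mathcal{I}$ coordinates under $\Phi_t$ for (1)--(2), and apply Lemma~\ref{lem:2.3} on $\Hit_n(\mathrm{S})$ and then on $\Delta$ for (3)--(4).

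The differences are cosmetic. You take a bounded gluing set and control the whole orbit $\bigcup_t\Phi_t(N_m)$, whereas the paper takes all of $\mathcal{G}$ so that $N_I$ is itself flow-invariant and only passes to a relatively compact subset at the end. Your contradiction argument also yields $\inf K\to\infty$ directly, rather than unboundedness followed by passing to a subsequence.
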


\begin{proof}
    Parametrize $\Hit_n(\mathrm{S})$ with a real analytic diffeomorphism 
   \[\Xi:\Hit_n(\mathrm{S}) \to (\mathfrak{a}^+)^{(3g-3)} \times \mathcal{I}\times  \mathcal{G}
    \]
    as in Section~\ref{parametrization}. 
    
    Let 
    \[
    M_j=\diag\left((n-1)j\epsilon,(n-2)j\epsilon,\cdots,2j\epsilon,j\epsilon,-\frac{n(n-1)}{2}j\epsilon\right)
    \]
    for $1\leq j \leq 3g-3$, and let 
    \[
        L_j := \left\{X \in \mathfrak{a}^+  \mid |\omega_k(M_j) - \omega_k(X)|< \frac{\epsilon}{3}  \ \text{for all} \ 1\leq k \leq n \right\}.
    \]
    Since $M_j\in\mathfrak{a}^+$, we see that $L_j \neq \emptyset$.
    
    For every $j$ and every $M\in L_j$ with $n>2$, we have
\begin{equation}\label{eq:lowerbound1}
\omega_1(M)-\omega_n(M)>\frac{(n-1)(n+2)}{2}j\epsilon - \frac{2}{3}\epsilon>\epsilon.
\end{equation}
Also, for $M\in L_j$, we have 
\[
\omega_{k}(M)-\omega_{k+1}(M)>\left(j-\frac{2}{3}\right)\epsilon>0
\]
for $1 \leq k \leq n-2$, and 
\[
\omega_{n-1}(M)-\omega_n(M)>\left(\frac{n(n-1)}{2}+1\right)j\epsilon - \frac{2}{3}\epsilon>0.
\]

Choose a compact exhaustion $\{B_{j,i}\}_{i=1,2\cdots}$ in $\mathcal{I}_j$, that is, a nested sequence of compact sets satisfying $B_{j,i}\subset B_{j,i+1}^{\circ}$ and $\bigcup_i B_{j,i}=\mathcal{I}_j$ for every $j=1,\ldots,2g-2$.

Consider the inverse image 
    \[
    N_i = \Xi^{-1}\left(\prod^{3g-3}_{j=1}L_j
    \times \prod_{j=1}^{2g-2}(B_{j,i+1}^\circ\setminus B_{j,i})
    \times  \mathcal{G}\right).
    \]
    We show that, for some large $I$, $N_I$ satisfies (\ref{nondegenerate}), and $N_I \subset \Hit_n^{\epsilon}(\mathrm{S})$.

Let $K_i:=\inf \{ K([\rho]) \mid [\rho] \in N_i \}$, where $K(\rho)$ is the function that appeared in Theorem~\ref{thm:zhang1}. Since $\{ K([\rho]) \mid [\rho] \in N_i \} \geq 0 $, $K_i$ is well-defined. 

\begin{claim}
The set $\{K_i\}$ is unbounded. 
\end{claim}
\begin{proof}
Suppose that $\{K_i\}$ is bounded. Passing to a subsequence if necessary, we may assume that $K_i$ converges. Then, for each open set $N_i$, we can choose $[\rho_i] \in N_i$ such that $K_i \leq K([\rho_i]) < K_i + \frac{1}{i}$. Observe that, by construction, the sequence $\{[\rho_i]\}_{i=1}^{\infty}$ is an internal sequence.  Thus, the convergence of $K([\rho_i]) - K_i \to 0$ contradicts Theorem~\ref{thm:zhang2}. This implies that $\{K_i\}$ is unbounded.
\end{proof} 

Since $\{K_i\}$ is unbounded, we can choose a subsequence $\{K_{i_k}\}$ such that $K_{i_k}\to \infty$ as $k\to \infty$.  Let $\{N_{i_k}\}_{k=1}^\infty$ be the corresponding subsequence. 

Set 
\[
\zeta =\frac{(n-1)(n+2)}{2}\epsilon - 2\epsilon >0.
\]
By Theorem~\ref{thm:zhang1} and the claim above, we can find a large enough $k_0$ such that
\begin{equation}\label{eq:lowerbound2}
\length_{\rho}(\gamma) > \frac{(n-1)(n+2)(3g-3)}{2}\epsilon + \epsilon+\zeta
\end{equation}
for all $[\rho]\in N_{i_{k_0}}$ and all essential closed curves $\gamma$ not isotopic to an element of $\pants$. Let $I=i_{k_0}$. 

\emph{Case 1.} Assume that $\gamma$ is non-isotopic to any elements of $\mathcal{P}$. Then, for any $\left[\rho_1\right],\left[\rho_2\right] \in N_I$, 
    \[
    |\length_{\rho_1}(\alpha_i)-\length_{\rho_2}(\gamma)|
    = \length_{\rho_2}(\gamma)-\length_{\rho_1}(\alpha_i) 
    > \zeta
    \]
since $\length_{\rho_1}(\alpha_i) < \frac{(n-1)(n+2)(3g-3)}{2}\epsilon + \epsilon$ for all $1 \leq i \leq 3g-3$.

\emph{Case 2.} Assume that $\gamma$ is isotopic to some $\alpha_{j} \in \mathcal{P}$ with $j\ne i$. Then for any $\left[\rho_1\right],\left[\rho_2\right] \in N_I$,
\[
    |\length_{\rho_1}(\alpha_i)-\length_{\rho_2}(\alpha_j)| > \frac{(n-1)(n+2)|i-j|}{2}\epsilon - 2\epsilon > \frac{(n-1)(n+2)}{2}\epsilon - 2\epsilon = \zeta.
\]

By (\ref{eq:lowerbound1}), (\ref{eq:lowerbound2}), and the construction of $N_I$, we know that
\begin{equation}\label{eq:inthick}
N_I \subset \Hit_{n}^{\epsilon}(\mathrm{S}).
\end{equation}

By definition of the bulging flow and by Lemma~\ref{prop:main}, $(\mathfrak{a}^+)^{(3g-3)}$ and $\mathcal{I}$ are invariant pointwise under the bulging flow along a curve in $\mathcal{P}$. It follows that $\Phi_t(\left[\rho\right]) \in N_I$ for all $\left[\rho\right] \in N_I$ and all $t\in \bR$. Therefore $N_I$ satisfies (\ref{inthick}) from (\ref{eq:inthick}). 

Meanwhile, for any $t, s \in \bR$ and $\left[\rho_1\right],\left[\rho_2\right] \in N_I$, we obtain
\[
|\ell_{\Phi_t([\rho_1])} (\alpha_i) -\ell_{\Phi_s([\rho_2])} (\gamma)|>\zeta
\]
unless $\gamma$ is isotopic to $\alpha_i$ since $\Phi_t([\rho_1])$ and $\Phi_s([\rho_2])$ are still in $N_I$. Therefore, $N_I$ satisfies (\ref{nondegenerate}).

Thanks to Lemma~\ref{lem:2.3}, there exists a nonempty subset $N' \subset N_I$ satisfying (\ref{fundomain1}).

Choose any $[\rho] \in N'$. Since $\mathcal{R}(N') \cap \Delta$ is a nonempty open set, and since $\Mod(\mathrm{S}_1, \partial \mathrm{S}_1)$ acts properly and effectively on $\Delta$, we can apply  Lemma~\ref{lem:2.3}, to find an open neighborhood 
    $U\subset \Delta\cap \mathcal{R}(N')$ such that 
    \[
        \{\phi \in \Mod(\mathrm{S}_1, \partial \mathrm{S}_1) \mid \phi_*(U) \cap U \neq \emptyset\}=\{1\}.
    \]
    We choose any relatively compact, open subset $N\subset \mathcal{R}^{-1}(U)\cap N'$. Then $N$ is the desired open set satisfying (\ref{inthick}) - (\ref{fundomain2}).
    \end{proof}

Theorem~\ref{thm:main} can be shown by using the argument from \cite{Choi2025volume}. For convenience, we paraphrase their proof. In what follows, 
\[
\Pi:\Hit_n(\mathrm{S}) \to \mathcal{M}_n(\mathrm{S})=\Hit_n(\mathrm{S})/\Mod(\mathrm{S})
\]
denotes the projection map. 

\begin{lemma}[\cite{Choi2025volume}]\label{lem:subkey}
    Let $N$ be chosen as in Lemma~{\ref{lem:key}}.  Then there exists $D_N > 0$ with the following properties provided that $n>2$.
    \begin{enumerate}
        \item\label{notintersect1} $\Pi(N)\cap\Pi(\Phi_t(N)) = \emptyset$ provided $t > D_N$.
        \item\label{injective} $\Pi|\Phi_t(N)$ is injective for each t.
        \item\label{notintersect2} $\Pi(\Phi_t(N)) \cap \Pi(\Phi_s(N)) = \emptyset$ provided $|s-t| > D_N$.
    \end{enumerate}
\end{lemma}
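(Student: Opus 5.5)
The plan follows the strategy of \cite{Choi2025volume}. Write $N_t=\Phi_t(N)$. Suppose $\phi_*(\Phi_t[\rho_1])=\Phi_s[\rho_2]$ for some $\phi\in\Mod(\mathrm{S})$ and $[\rho_1],[\rho_2]\in N$. The first step is to show that such a $\phi$ must preserve each curve $\alpha_i$ of $\pants$ up to isotopy. Since length spectra are compatible with the action, $\ell_{\Phi_t[\rho_1]}(\alpha_i)=\ell_{\Phi_s[\rho_2]}(\phi(\alpha_i))$. Property~(\ref{nondegenerate}) of Lemma~\ref{lem:key} then forces $\phi(\alpha_i)$ to be isotopic to $\alpha_i$ for every $i$. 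In particular $\phi$ fixes $\alpha$, so, after possibly composing with an element reversing orientation on $\alpha$ or swapping sides (which I expect the orientation conventions, or a small further shrinking of $N$, to rule out), $\phi$ lifts to an element $\hat\phi\in\Mod(\mathrm{S}_1,\partial\mathrm{S}_1)$ composed with a power $T_\alpha^m$ of the Dehn twist about $\alpha$.

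Next I restrict to $\mathrm{S}_1$. The bulging flow does not change $\rho|_{\pi_1(\mathrm{S}_1)}$, so $\mathcal{R}\circ\Phi_t=\mathcal{R}$. The Dehn twist $T_\alpha$ also acts trivially on $\mathcal{R}$, since on the HNN presentation it fixes $\pi_1(\mathrm{S}_1)$ and sends $v\mapsto v\alpha^{\pm1}$. Hence $\hat\phi_*\mathcal{R}[\rho_1]=\mathcal{R}[\rho_2]$, and property~(\ref{fundomain2}) gives $\hat\phi=1$. Thus $\phi=T_\alpha^m$, and the equation reduces to $T_\alpha^m\Phi_t[\rho_1]=\Phi_s[\rho_2]$.

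The key point is that $T_\alpha^m$ acts on the $\alpha$-gluing coordinates as a Goldman twist flow along $\alpha$ with $Q$ equal to $m$ times the logarithm of $\rho(\alpha)$; this commutes with $\Phi$. So the relation becomes $\Phi^{X,\alpha}[\rho_1]=[\rho_2]$ with $X=(t-s)Q+m\log\rho_1(\alpha)$. This is the main obstacle. Since $N$ is relatively compact and the $\alpha$-eigenvalue data of $N$ lie in $L_j$, the displacement $[\rho_1]\mapsto\Phi^{X,\alpha}[\rho_1]$ must keep a point of $N$ inside the compact closure $\bar N$. Properness of the action of the centralizer flow (via the $\mathcal{G}_\alpha$ coordinates, which are affine in $X$ up to a bounded error) bounds $X$ in a compact set $C$.

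For $n>2$, $Q$ is not proportional to $\log\rho(\alpha)$, because $\log\rho(\alpha)$ lies near $M_j$ whereas $Q=\diag(1,0,\dots,-1)$ or $\diag(1,-2,1)$. Hence the component of $X$ along $Q$, transverse to the line $\bR\log\rho(\alpha)$, is comparable to $|t-s|$ uniformly over $\bar N$, and $|t-s|$ is bounded by some $D_N$. This bound gives (\ref{notintersect1}) and (\ref{notintersect2}). For (\ref{injective}), take $t=s$: then $X=m\log\rho_1(\alpha)$, and $\Phi_t$ commutes with $T_\alpha$, so $T_\alpha^m[\rho_1']=[\rho_2']$ for the corresponding points $[\rho_1'],[\rho_2']\in N$ with $\Phi_t[\rho_k']=\Phi_t[\rho_k]$. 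Property~(\ref{fundomain1}) then gives $m=0$, so $\Pi$ is injective on $\Phi_t(N)$.
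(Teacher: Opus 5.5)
Your proposal is correct and follows essentially the same route as the paper. Properties (2) and (4) of Lemma~\ref{lem:key}, together with $\mathcal{R}\circ\Phi_t=\mathcal{R}$, force $\phi$ to be a power of the Dehn twist about $\alpha$; this is rewritten as a Goldman twist commuting with the bulging flow; the properness statement (\cite[Proposition~3.2]{Choi2025volume} in the paper) and the uniform non-proportionality of $Q$ and $\log\rho(\alpha)$ over the compact set $\overline{N}$ then bound $|t-s|$; and property (3) gives injectivity. Your hedge about orientation reversal or side swapping is unnecessary: the paper's $\Mod(\mathrm{S}_1,\partial\mathrm{S}_1)$ only requires $\partial\mathrm{S}_1$ to be preserved setwise, so property (4) already rules that case out.
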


\begin{proof}
The proof is almost identical to the proof of \cite[Lemma 4.9]{Choi2025volume}. 
      
    We first claim that, if there exists a homeomorphism $\phi \in \Mod(\mathrm{S})$ such that $\phi(N) \cap N \neq \emptyset$, then $\phi$ is a power of the Dehn twist along $\alpha$. To see this, observe that, by Lemma~\ref{lem:key}(\ref{nondegenerate}), $\phi$ preserves the isotopy class of the curve $\alpha$.  This enable us to view $\phi$ as an element of $\Mod(\mathrm{S}_1,\partial\mathrm{S}_1)$ where $\mathrm{S}_1= \mathrm{S}\setminus\nu(\alpha)$. Then by Lemma~\ref{lem:key}(\ref{fundomain2}), $\phi$ represents the trivial element in $\Mod(\mathrm{S}_1,\partial \mathrm{S}_1)$. Therefore, the claim follows. 
    
    Meanwhile, by \cite[Proposition~3.2]{Choi2025volume}, we know that there exists $M>0$ such that $\Phi^{Q,\alpha}(N) \cap N = \emptyset$ for all $Q\in \mathfrak{a}$ with $\|Q\| > M$. 
    
    Now if (\ref{notintersect1}) fails, then for any $n\in \mathbb{N}$, there exist $t_n>n$, a power of the Dehn twist $\phi_n$ along $\alpha$ and $[\rho_n]\in N$ such that $\phi_n\cdot \Phi_{t_n}([\rho_n])\in N$. Note that for a non-separating essential simple closed curve $\alpha$, the Dehn twist along $\alpha$ acts as  $[\rho]\mapsto \Phi ^{\lambda(\rho(\alpha)), \alpha}([\rho])$, where
      \[
      \lambda(\rho(\alpha))=\diag(-\lambda_1(\rho(\alpha)), \cdots, -\lambda_n(\rho(\alpha))).
      \] 
      It follows that there exist integers $m_n\in \mathbb{Z}$ such that  
      \[
      \Phi^{t_nQ+m_n\lambda(\rho(\alpha)),\alpha}([\rho])\in N,
      \]
      where  $Q=\diag(1,-2,1)$ or $Q=\diag(1,0,0,\cdots,0,-1)$. Note that $Q$ lies on the boundary of $\overline{\mathfrak{a}^+}$ while 
      \[
      \{\lambda(\rho(\alpha))\mid [\rho]\in \overline{N}\}\subset  -\mathfrak{a}^+.
      \]
      Since the closure $\overline{N}$ is compact, we observe that 
      \[
      \|t_nQ + m_n\lambda(\rho(\alpha))\| \to \infty
      \]
      as $t_n \to \infty$. This implies that $\Phi^{t_nQ+m_n\lambda(\rho(\alpha)),\alpha}(N) \cap N  = \emptyset$ for all large $n$, a contradiction. Thus (\ref{notintersect1}) holds.

      (\ref{injective}) and (\ref{notintersect2}) follow from Lemma~\ref{lem:key}(\ref{fundomain1}) and the fact that the Dehn twist action along $\alpha$ and the bulging flow $\Phi_t$ along $\alpha$ commute \cite[Lemma~4.1]{Choi2025volume}. 
\end{proof}

\begin{proof}[Proof of Theorem~\ref{thm:main}]
    Let $N \subset \Hit_n^\epsilon(\mathrm{S})$ be chosen as in Lemma~\ref{lem:key} and let $D_N$ be the constant from Lemma~\ref{lem:subkey}. We choose a sequence $0<t_1<t_2<\cdots$ of reals so that $t_{i+1} - t_i > D_N$. Lemma~\ref{lem:key}(\ref{inthick})  and Lemma~\ref{lem:subkey} together shows that $\Pi(\Phi_{t_i}(N))$ are open and pairwise disjoint in $\mathcal{M}^\epsilon_n(\mathrm{S})$. Since $\Phi_t$ is volume preserving, $\Pi(\Phi_{t_1}(N)), \Pi(\Phi_{t_2}(N))$, are pairwise disjoint and have identical positive volume in $\mathcal{M}_n^\epsilon(\mathrm{S})$.
\end{proof}

\bibliographystyle{alpha}
\bibliography{ref}

\end{document}